\newtheorem{theorem}{Theorem}[section]
\newtheorem*{maintheorem}{Main Theorem}
\newtheorem{lemma}[theorem]{Lemma}
\newtheorem{prop}[theorem]{Proposition}
\newtheorem{cor}[theorem]{Corollary}
\theoremstyle{definition}
\newtheorem{defn}[theorem]{Definition}
\newtheorem{rem}[theorem]{Remark}
\newtheorem{ex}[theorem]{Example}
\newcommand{\sbt}{\,\begin{picture}(-1,1)(0.5,-1)\circle*{1.8}\end{picture}\hspace{.05cm}}
\DeclareMathOperator{\id}{id}
\DeclareMathOperator{\tr}{Tr}
\DeclareMathOperator{\THH}{THH}
\DeclareMathOperator{\hofib}{hof}
\DeclareMathOperator{\hoc}{hoc}
\DeclareMathOperator{\KR}{KR}
\DeclareMathOperator{\K}{K}
\DeclareMathOperator{\TC}{TC}
\DeclareMathOperator{\Sp}{Sp}
\begin{document}
\begin{center}\LARGE{A Dundas-McCarthy theorem for bimodules over exact categories}
\end{center}

\begin{center}\large{Emanuele Dotto}
\end{center}
\vspace{.3cm}

\begin{quote}
\textsc{Abstract} From a bimodule $M$ over an exact category $C$, we define an exact category $C\ltimes M$ with a projection down to $C$. This construction classifies certain split square zero extensions of exact categories. We show that the trace map induces an equivalence between the relative $K$-theory of $C\ltimes M$ and its relative topological cyclic homology. When applied to the bimodule $\hom(-,-\otimes _AM)$ on the category of finitely generated projective modules over a ring $A$ one recovers the classical Dundas-McCarthy theorem for split square zero extensions of rings.
\end{quote}

\tableofcontents

\section*{Introduction}

A bimodule over a category $C$ enriched in Abelian groups is an enriched functor $C^{op}\otimes C\rightarrow Ab$ to the category of Abelian groups. We define the semi-direct product of $C$ and $M$ as the category $C\ltimes M$ with the same objects as $C$, morphism groups
\[(C\ltimes M)(a,b)=C(a,b)\oplus M(a,b)\]
and composition defined by using the bimodule structure. A category enriched in Abelian groups with one object is precisely a ring $A$, and this construction generalizes the classical semi-direct product of rings and bimodules $A\ltimes M$. Moreover the groupoid of isomorphisms $i(C\ltimes M)$ is isomorphic to the Grothendieck construction of the composite functor
\[iC\stackrel{\Delta}{\longrightarrow}(iC)^{op}\times iC\stackrel{M}{\longrightarrow}Ab\]
where $\Delta$ sends a morphism $f$ in $C$ to $(f^{-1},f)$.

If $C$ is an exact category and $M$ is exact in both variables, we show in \ref{CltimesMexact} that $C\ltimes M$ has an induced exact structure. Projecting off the module summand on morphisms gives an exact functor $C\ltimes M\rightarrow C$. The aim of this paper is to compare the relative $K$-theory of $C\ltimes M\rightarrow C$ and its relative topological cyclic homology, namely:
\begin{maintheorem}
The square of spectra
\[\xymatrix{K(C\ltimes M)\ar[d]\ar[r]&\TC(C\ltimes M)\ar[d]\\
K(C)\ar[r]&\TC(C)
}\]
is homotopy cartesian.
\end{maintheorem} 
The proof of this result relies heavily on the classical proof of the Dundas-McCarthy theorem of \cite{McCarthy} for split square zero extensions of rings. In fact, the proof of \cite{McCarthy} translates to this situation with very minor changes, once we decompose the $K$-theory of $C\ltimes M$ as a disjoint union
\[K(C\ltimes M)\simeq \coprod_{c\in S_{\sbt}C}S_{\sbt}M(c,c)\]
where $S_{\sbt}M$ is an extension of the bimodule $M$ to the $S_{\sbt}$-construction of $C$. This is proved in \ref{splitKRmodel}, using techniques similar to \cite[1.2.5]{DGM}. The main point is that the zero section induces a levelwise essentially surjective functor $S_{\sbt}C\rightarrow S_{\sbt}(C\ltimes M)$, which might be surprising when $C$ is not split-exact.  

In \S\ref{cltimesmsec} we show that the bimodules over $C$ classify all the ``split square zero extensions of exact categories'' over $C$ via the semi-direct product construction described above. For the category $\mathcal{P}_A$ of finitely generated projective modules over a ring $A$, and an $A$-bimodule M, this gives an equivalence of categories
\[\mathcal{P}_{A\ltimes M}\simeq \mathcal{P}_A\ltimes \hom_A(-,-\otimes _AM)\]
over $\mathcal{P}_A$. Specified to this situation the  main theorem above recovers the Dundas-McCarty theorem of \cite{McCarthy} for the projection map of rings $A\ltimes M\rightarrow A$.

\section{Split square zero extensions of exact categories and bimodules}\label{cltimesmsec}

Let $Ab$ be the symmetric monoidal category of Abelian groups and tensor product.

\begin{defn}
A split extension of $Ab$-enriched categories is a triple $(p,s,U)$ of enriched functors $p\colon C'\longrightarrow C$ and $s\colon C\longrightarrow C'$ and a natural isomorphism $U\colon p\circ s\Rightarrow \id_{C}$, such that the section $s\colon C\longrightarrow C'$ is essentially surjective.

We call $(p,s,U)$ square zero if for every pair of composable morphisms $f\colon s(a)\rightarrow s(b)$ and $g\colon s(b)\rightarrow s(c)$ in $C'$ with $p(f)=0$ and $p(g)=0$ in $C$, the composite $g\circ f$ is zero in $C'$.

A split extension of exact categories is a split extension $(p,s,U)$ with $C$ and $C'$ exact categories and $p$ and $s$ exact functors.
\end{defn}
Notice that the isomorphism $U\colon p\circ s\Rightarrow \id_{C}$ insures that $s$ is essentially injective. Since it is assumed to be essentially surjective, it follows that $C'$ and $C$ have essentially the same objects.

\begin{ex}\label{sqzerorings}
Split square zero extensions of $Ab$-enriched categories with one object are precisely the split square-zero extensions of rings $f\colon R\longrightarrow A$. These are all of the form $A\ltimes M\longrightarrow A$ for some $A$-bimodule $M$, where $A\ltimes M$ is the Abelian group $A\oplus M$ with multiplication
\[(a,m)\cdot(b,n)=(ab,an+mb)\]
What follows is the generalization for exact categories.
\end{ex}

\begin{defn} A bimodule over an exact category $C$ is an additive functor $M\colon C^{op}\otimes C\longrightarrow Ab$ which is exact in both variables. In particular $M(0,c)=M(c,0)=0$.
For a morphism $f\colon a\rightarrow b$ and an object $c$ in $C$ we denote
\[f_\ast=M(\id_c\otimes f)\colon M(c,a)\rightarrow M(c,b) \ \ \ ,\ \ \ f^\ast=M(f\otimes \id_c)\colon M(b,c)\rightarrow M(a,c)\]
the induced maps.
\end{defn}

Given a bimodule $M$ over $C$, define a category enriched in Abelian groups $C\ltimes M$ with the same objects of $C$, and morphism groups $(C\ltimes M)(a,b)=C(a,b)\oplus M(a,b)$. Composition is defined by
\[\big(f\colon b\rightarrow c,m\in M(b,c)\big)\circ \big(g\colon a\rightarrow b,n\in M(a,b)\big)=\big(f\circ g,f_\ast n+g^\ast m\big)\]
Projection onto the morphisms of $C$ defines a functor $p\colon C\ltimes M\longrightarrow C$. This is split by the functor $s\colon C\longrightarrow C\ltimes M$ defined by inclusion at zero, with strict composition $p\circ s=\id$.

\begin{rem}\label{ltimesGrothendieck}
If $C$ is a groupoid, there is a functor $\Delta\colon C\rightarrow C^{op}\times C$ that sends $f\colon c\rightarrow d$ to $(f^{-1},f)\colon (c,c)\rightarrow (d,d)$. The category $C\ltimes M$ is then the Grothendieck construction of the composite $M\circ\Delta\colon C\rightarrow C^{op}\times C\rightarrow Ab$, in symbols
\[C\ltimes M=C\wr (M\circ \Delta)\]
In particular by denoting $iC$ the groupoid of isomorphisms of $C$ we have
\[i(C\ltimes M)=(iC)\ltimes M=(iC)\wr (M\circ \Delta)\]
where we denoted the restriction $(iC)^{op}\times iC\rightarrow Ab$ also by $M$.
\end{rem}

\begin{ex}
Considering a ring $A$ as an $Ab$-enriched category with one object, an $Ab$-enriched functor $M\colon A^{op}\otimes A\longrightarrow Ab$ is precisely a bimodule over $A$ in the classical sense.
The construction $A\ltimes M$ above is the classical semidirect product of \ref{sqzerorings}.
\end{ex}

\begin{ex}
Let $\mathcal{P}_A$ be the exact category of finitely generated projective modules over a ring $A$, and let $M$ be an $A$-bimodule. The functor
\[\hom_A(-,-\otimes_AM)\colon (\mathcal{P}_A)^{op}\otimes \mathcal{P}_A\longrightarrow Ab\]
is a bimodule on $\mathcal{P}_A$. It is exact in both variables since $\mathcal{P}_A$ is a split-exact category.
\end{ex}

\begin{defn}\label{defexactCM} 
A sequence $a\stackrel{(i,m)}{\longrightarrow}b\stackrel{(q,n)}{\longrightarrow}c$ of $C\ltimes M$ is exact if the underlying sequence $a\stackrel{i}{\longrightarrow}b\stackrel{q}{\longrightarrow}c$ is exact in $C$, and \[q_\ast m+i^\ast n=0\ \ \in M(a,c)\]
\end{defn}

\begin{prop}\label{CltimesMexact}
The exact sequences of \ref{defexactCM} define an exact structure on $C\ltimes M$, and the triple $(p,s,\id)$ is a split square zero extension of exact categories.
\end{prop}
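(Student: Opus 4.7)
The plan is to verify Quillen's axioms for the class of sequences in \ref{defexactCM}, then check that $(p,s,\id)$ satisfies the three conditions of the definition of a split square zero extension of exact categories.

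The whole argument rests on a single observation: a pair $(i,m)$ is an admissible monomorphism in $C\ltimes M$ if and only if $i$ is admissible in $C$, and dually for epimorphisms. Indeed, if $i$ has cokernel $q\colon b\to c$ in $C$, exactness of $M(-,c)\colon C^{op}\to Ab$ on the short exact sequence $a\to b\to c$ makes $i^*\colon M(b,c)\to M(a,c)$ surjective, so one can solve $i^*n=-q_*m$ and obtain a sequence exact in the sense of \ref{defexactCM}. Granted this characterization, the axioms that isomorphisms are admissible and that admissible monos (resp.\ epis) are closed under composition are immediate from the corresponding facts in $C$. The kernel-cokernel pairing follows by a direct check whose existence and uniqueness parts both use the short exact sequence $M(c,d)\to M(b,d)\to M(a,d)$ in $Ab$ obtained by applying $M(-,d)$ to $a\to b\to c$.

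The main obstacle is the pushout axiom (the pullback axiom being dual). Given an admissible mono $(i,m)\colon a\to b$ and any $(f,\mu)\colon a\to a'$, I form the pushout of $i$ along $f$ in $C$, producing an admissible mono $i'\colon a'\to b'$ and a map $f'\colon b\to b'$. I then need $m'\in M(a',b')$ and $\mu'\in M(b,b')$ satisfying the commutativity condition $i'_*\mu+f^*m'=f'_*m+i^*\mu'$ in $M(a,b')$; since $i^*\colon M(b,b')\to M(a,b')$ is surjective, one can take, for instance, $m'=0$ and any $\mu'$ lifting $f'_*m-i'_*\mu$ through $i^*$. To verify the universal property, I apply the exact functor $M(-,d)$ to the bicartesian short exact sequence $a\to b\oplus a'\to b'$ in $C$, obtaining a short exact sequence $M(b',d)\to M(b,d)\oplus M(a',d)\to M(a,d)$ in $Ab$. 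A direct computation using the compatibility condition of any cocone then shows that the relevant pair of module data lies in the kernel of the last map, and therefore lifts uniquely to the required module component on $b'\to d$.

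Finally, $p$ and $s$ are exact by direct inspection of \ref{defexactCM}, $s$ is essentially surjective because it is the identity on objects, and the square-zero condition holds automatically: if $f$ and $g$ project to zero in $C$, the induced operations $f^*$ and $g_*$ vanish by additivity of $M$, so the second component of the composite $(g,n)\circ(f,m)=(0,g_*m+f^*n)$ in $C\ltimes M$ vanishes as well.
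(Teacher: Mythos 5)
Your proposal is correct and takes essentially the same route as the paper: build the pushout/pullback by setting one module component to zero and lifting the other through the surjection supplied by exactness of $M$, then verify the universal property and the kernel--cokernel property using exactness of $M$ in each variable, with the exactness of $p,s$ and the square-zero condition checked exactly as you state. The only cosmetic differences are that you organize the easy axioms around the upfront observation that $(i,m)$ is admissible precisely when $i$ is, and you check the universal property by applying $M(-,d)$ to the bicartesian sequence $a\rightarrowtail b\oplus a'\twoheadrightarrow b'$, whereas the paper runs the dual computation by showing $(\overline{f}_\ast,\overline{g}_\ast)\colon M(d,p)\to M(d,c)\times_{M(d,b)}M(d,a)$ is an isomorphism via a diagram of exact sequences --- the same argument in different packaging (modulo an immaterial sign in your choice of lift).
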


\begin{proof} Let us prove the non obvious axioms for the exact structure of $C\ltimes M$. 
First we need to show that if $a\stackrel{(i,m)}{\longrightarrow}b\stackrel{(q,n)}{\longrightarrow}c$ is exact then $(i,m)$ is a kernel for $(q,n)$, and that $(q,n)$ is a cokernel for $(i,m)$. The composite $(q,n)\circ(i,m)=(q\circ i,q_\ast m+i^\ast n)$ is zero by definition. We need to show that given any $(f,l)\colon d\rightarrow b$ with 
\[(q,n)\circ (f,l)=(q\circ f,q_\ast l+f^{\ast}n)=0\]
 there is a unique map $(u,x)\colon d\rightarrow a$ with $(i,m)\circ (u,x)=(f,l)$. Let $u\colon d\rightarrow b$ be the universal map in $C$ for the kernel $i\colon a\rightarrow b$ of $q\colon b\rightarrow c$. The element $x\in M(d,a)$ must be the unique solution for the equation
 \[l=u^\ast m+i_\ast x\]
Since $i_\ast\colon M(d,a)\rightarrow M(d,b)$ is injective by exactness of $M$, if a solution exists it must be unique. For the existence, $l-u^\ast m$ must belong to the image of $i_\ast$ which is equal to the kernel of $q_\ast\colon M(d,b)\rightarrow M(d,c)$. We compute
\[q_\ast(l-u^\ast m)=q_\ast l-q_\ast u^\ast m=-f^{\ast}n-u^\ast q_\ast m =-f^{\ast}n+u^\ast i^\ast n=-f^{\ast}n+f^\ast n=0\]
where $-q_\ast m =i^\ast n$ by exactness. A dual proof shows that $(q,n)$ is a cokernel for $(i,m)$.

Now we show that pullbacks of admissible epimorphisms exist in $C\ltimes M$, and are admissible epimorphisms. Given a diagram $c\stackrel{(g,n)}{\rightarrow} b\stackrel{(f,m)}{\twoheadleftarrow} a$ in $C\ltimes M$, let 
\[\xymatrix{p\ar[r]^{\overline{g}}\ar@{->>}[d]_{\overline{f}}&a\ar@{->>}[d]^{f}\\
c\ar[r]_g&b
}\]
be the pullback of the underlying diagram in $C$. First we find $\overline{m}\in M(p,c)$ and $\overline{n}\in M(p,a)$ such that
\[\xymatrix{p
\ar[r]^{(\overline{g},\overline{n})}\ar@{->>}[d]_{(\overline{f},\overline{m})}&a\ar@{->>}[d]^{(f,m)}\\
c\ar[r]_{(g,n)}&b
}\]
commutes. This is the same as the condition $f_\ast\overline{n}+\overline{g}^\ast m=g_\ast \overline{m}+\overline{f}^\ast n$.
Choose $\overline{m}=0$ and $\overline{n}$ in the preimage of $\overline{f}^\ast n-\overline{g}^\ast m$ by the surjective map $f_\ast\colon M(p,a)\twoheadrightarrow M(p,b)$.
For $p$ equipped with these maps, we show the universal property
\[\xymatrix{d\ar@{-->}[r]^{(u,z)}\ar@/^2pc/[rr]^{(k,x)}\ar[dr]_{(h,y)}&p
\ar[r]^{(\overline{g},\overline{n})}\ar@{->>}[d]^{(\overline{f},0)}&a\ar@{->>}[d]^{(f,m)}\\
&c\ar[r]_{(g,n)}&b
}\]
Let $u\colon d\rightarrow p$ be the universal map for the corresponding diagram in $c$. The element $z\in M(d,p)$ must by the unique solution to the equations
\[ \overline{g}_\ast z=x-u^\ast \overline{n}\ \ \ \ \ \mbox{and}\ \ \ \ \ \overline{f}_\ast z=y \]
Notice that $x-u^\ast\overline{n}$ and $y$ lie over the same element of $M(d,b)$, as
\[f_{\ast}(x-u^\ast\overline{n})=f_\ast x-u^\ast f_\ast\overline{n}=f_\ast x-u^\ast (-\overline{g}^\ast m+\overline{f}^\ast n)=f_\ast x+k^\ast m-h^\ast n=g_\ast y\]
where the last equality is the commutativity of the outer diagram.
Therefore a solution $z$ to the equations above exists and is unique precisely when the map $(\overline{f}_\ast,\overline{g}_\ast)\colon M(d,p)\rightarrow M(d,c)\times_{M(d,b)}\times M(d,a)$ is an isomorphism, that is when
\[\xymatrix{M(d,p)\ar[r]^{\overline{g}_\ast}
\ar@{->>}[d]^{\overline{f}_\ast}
& M(d,a)\ar@{->>}[d]^{f_\ast}\\
M(d,c)\ar[r]_{g_\ast}&M(d,b)
}\]
is a pullback square. This follows from exactness of $M$ in the second variable, and from the fact that kernels for $f$ and $\overline{f}$ in $C$ are isomorphic. To show that $(\overline{f},0)$ is an admissible epimorphism choose an exact sequence $k\stackrel{i}{\rightarrowtail}p\stackrel{f}{\twoheadrightarrow}c$ in $C$. Then $k\stackrel{(i,0)}{\rightarrowtail}p\stackrel{(f,0)}{\twoheadrightarrow}c$ is clearly exact in $C\ltimes M$.
A dual argument shows the analogous statement for pushouts and admissible monomorphisms.
The other axioms follow easily from the axioms for $C$.

The functors $p$ and $s$ are clearly exact. To see that $(p,s,U)$ is square zero, we need to show that the composition of morphisms of the form $(0,m)$ is zero. This is clear.
\end{proof}

Given a split extension $(p,s,U)$, let $\ker p\colon C^{op}\otimes C\longrightarrow Ab$ be the bimodule defined by
\[\ker p(a,b)=\ker\big(C'(s(a),s(b))\stackrel{p}{\longrightarrow} C(ps(a),ps(b))\big)\]
\begin{rem}
This bimodule is always left exact in both variables. It is also right exact provided the $\hom$-functors of $C$ and $C'$ are also right exact.
\end{rem}
If the extension $(p,s,U)$ is square zero, there is a canonical functor $F\colon C\ltimes \ker p\longrightarrow C'$ that sends an object $c$ to $s(c)$, and a morphism $(f,m)$ to $s(f)+m$. The square zero condition is used for showing that $F$ preserves composition. Moreover the diagram
\[\xymatrix{C\ltimes \ker p\ar[r]^-{F}\ar[d]&C'\ar[d]^{p}\\
C\ar[r]^{\simeq}_{p\circ s}&C}
\]
commutes, with the functor $p\circ s$ an equivalence of exact categories.

\begin{prop}\label{classsplitext}
Let $(p,s,U)$ be a split square zero extension of $Ab$-enriched categories. The functor $F\colon C\ltimes \ker p\longrightarrow C'$ is an equivalence of $Ab$-enriched categories.

I moreover $p$ and $s$ are exact and the $\hom$-functors of $C$ and $C'$ are right exact, $F\colon C\ltimes \ker p\longrightarrow C'$ is an equivalence of exact categories for the exact structure on $C\ltimes \ker p$ of \ref{CltimesMexact}.
\end{prop}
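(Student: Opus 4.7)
The strategy is to first establish that $F$ is an equivalence of $Ab$-enriched categories, then upgrade this to an equivalence of exact categories under the additional hypotheses.

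For the first claim, essential surjectivity of $F$ is immediate from that of $s$. Full faithfulness comes from the hom-set map $p\colon C'(s(a),s(b))\to C(ps(a),ps(b))$ being surjective with natural splitting induced by $s$ (with $U$ used to identify source and target), yielding a natural direct sum decomposition
\[C'(s(a),s(b))\;\cong\;C(a,b)\oplus\ker p(a,b)\;=\;(C\ltimes\ker p)(a,b)\]
realized by $F$ on morphisms. Preservation of composition follows from
\[F(f,m)\circ F(g,n)=(s(f)+m)(s(g)+n)=s(fg)+f_\ast n+g^\ast m+mn,\]
where $mn=0$ by the square-zero condition and $f_\ast n=s(f)\circ n$, $g^\ast m=m\circ s(g)$ by the definition of the bimodule structure on $\ker p$ inherited from composition in $C'$; this equals $F((f,m)\circ(g,n))$.

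For the exact structure, we assume $p,s$ are exact and the hom-functors of $C,C'$ are right exact, so that $\ker p$ is exact in both variables and $C\ltimes\ker p$ carries the exact structure of \ref{CltimesMexact}. Reflection of exactness is immediate: if $F(i,m),F(q,n)$ is admissible exact in $C'$, then the exact functor $p$ yields that $(i,q)$ is admissible exact in $C$, and expanding $(s(q)+n)(s(i)+m)=0$ in $C'$ using $nm=0$ recovers the cocycle condition $q_\ast m+i^\ast n=0$ of \ref{defexactCM}.

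The main obstacle is preservation of exactness. Given $(i,m),(q,n)$ exact in $C\ltimes\ker p$, I would exhibit an isomorphism of sequences in $C'$ of the form
\[\xymatrix{
s(a)\ar[r]^{s(i)}\ar[d]_{\alpha}^{\cong}&s(b)\ar[r]^{s(q)}\ar[d]_{\beta}^{\cong}&s(c)\ar@{=}[d]\\
s(a)\ar[r]_{s(i)+m}&s(b)\ar[r]_{s(q)+n}&s(c)
}\]
with $\alpha=\id+a'$ and $\beta=\id+b'$ for some $a'\in\ker p(a,a)$, $b'\in\ker p(b,b)$; these are automorphisms with inverses $\id-a'$ and $\id-b'$ because $(a')^2=(b')^2=0$ by the square-zero condition. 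Commutativity reduces to the equations $q_\ast b'=-n$ and $i^\ast b'=m+i_\ast a'$. Surjectivity of $q_\ast\colon\ker p(b,b)\twoheadrightarrow\ker p(b,c)$, from right exactness of $\ker p$ in the second variable along $b\twoheadrightarrow c$, yields $b'$. The cocycle condition then gives $q_\ast(i^\ast b'-m)=i^\ast q_\ast b'-q_\ast m=-i^\ast n-q_\ast m=0$, so $i^\ast b'-m\in\ker q_\ast=\im i_\ast$ by exactness of $\ker p$ in the second variable applied to $a\to b\to c$, producing $a'$. The top row is admissible exact because $s$ is exact, and the class of admissible exact sequences is closed under isomorphism, so the bottom row is admissible exact as well.
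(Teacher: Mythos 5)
Your proof is correct, and most of it coincides with the paper's: essential surjectivity from $s$, full faithfulness from the splitting $f\mapsto (U^{\ast}p(f),\,f-sp(f))$ of $p$ on hom-groups, functoriality via the square-zero relation, and reflection of exactness by applying $p$ and reading off the cocycle condition from the vanishing composite. The one place where you take a genuinely different route is preservation of exactness. The paper proves this by Lemma \ref{reductionexactseq}: any exact sequence of $C\ltimes\ker p$ is rectified \emph{inside} $C\ltimes\ker p$ by a single correction $x\in\ker p(b,b)$ solving $i^{\ast}x=-m$ and $q_{\ast}x=n$, whose existence is shown by a $3\times3$ diagram proving that $(i^{\ast},q_{\ast})$ surjects onto a pullback --- an argument that uses exactness of the bimodule in both variables; applying the exact functor $s$ then finishes. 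You instead rectify the image sequence directly in $C'$, allowing corrections $\alpha=\id+a'$ and $\beta=\id+b'$ on both $s(a)$ and $s(b)$ (automorphisms thanks to the square-zero condition), which requires only exactness of $\ker p$ in the second variable: surjectivity of $q_{\ast}$ produces $b'$, and $\ker q_{\ast}=\im i_{\ast}$ produces $a'$ after the cocycle computation. Your variant is slightly more economical in the exactness hypotheses used at that step; the paper's formulation, with a single correction on the middle object, is stated for an arbitrary bimodule $M$ and is reused later --- it is exactly the $p=2$ case of the inductive construction in the proof of Theorem \ref{splitKRmodel} --- which is what packaging it as a separate lemma buys.
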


\begin{proof}
The functor $F$ is obviously essentially surjective, since $s$ is by assumption. To see that it is fully faithful, define an inverse for
\[F\colon C(a,b)\oplus \ker p(a,b)\longrightarrow C'(s(a),s(b))\]
by sending $f\colon s(a)\longrightarrow s(b)$ to the pair $(U^{\ast}p(f),f-sp(f))$ where $U^{\ast}\colon C(ps(a),ps(b))\longrightarrow C(a,b)$ is the isomorphism induced by the isomorphism $U\colon ps\Rightarrow\id$.

We show that $F$ is exact. By lemma \ref{reductionexactseq} one can choose an isomorphism between any exact sequence $E$ in $C\ltimes \ker p$ and one of the form $a\stackrel{(i,0)}{\longrightarrow}b\stackrel{(q,0)}{\longrightarrow}c$. This gives an isomorphism between $F(E)$ and
\[F\big(a\stackrel{(i,0)}{\longrightarrow}b\stackrel{(q,0)}{\longrightarrow}c\big)=\big(s(a)\stackrel{s(i)}{\longrightarrow}s(b)\stackrel{s(q)}{\longrightarrow}s(c)\big)\]
which is exact by exactness of $s$.

It remains to show that if $E=\big(a\stackrel{(i,m)}{\longrightarrow}b\stackrel{(q,n)}{\longrightarrow}c\big)$ is any sequence of $C\ltimes \ker p$ whose image $F(E)$ is exact in $C'$, then $E$ is exact in $C\ltimes \ker p$. If $F(E)$ is exact the underlying sequence
\[\big(a\stackrel{i}{\longrightarrow}b\stackrel{q}{\longrightarrow}c\big)=pF(E)\]
is also exact. Moreover the composition $F(q,n)\circ F(i,m)$ is zero, and since $F$ is faithful we must have $(q,n)\circ (i,m)=0$ in $C\ltimes \ker p$, that is $q_\ast m+i^{\ast}n=0$.
\end{proof}

\begin{lemma}\label{reductionexactseq}
Any exact sequence of $C\ltimes M$ is isomorphic (non canonically) to an exact sequence of the form $a\stackrel{(i,0)}{\longrightarrow}b\stackrel{(q,0)}{\longrightarrow}c$. 
\end{lemma}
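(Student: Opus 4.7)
The plan is to produce the desired isomorphism with vertical components $(\id_a, \alpha)$, $(\id_b, \beta)$, $(\id_c, \gamma)$ for suitably chosen elements $\alpha \in M(a,a)$, $\beta \in M(b,b)$, $\gamma \in M(c,c)$. Note that any morphism of the form $(\id, \eta)$ is automatically an isomorphism in $C \ltimes M$ with inverse $(\id, -\eta)$, since $(\id, \eta) \circ (\id, -\eta) = (\id, \eta - \eta) = (\id, 0)$. Expanding the composition rule of $C \ltimes M$, the commutativity of the two squares of such a morphism of sequences is equivalent to the pair of identities
\[ i_* \alpha - i^* \beta = m \quad \text{in } M(a,b), \qquad q_* \beta - q^* \gamma = n \quad \text{in } M(b,c). \]

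I would set $\alpha = 0$ and solve these in turn. Applying the functor $M(-,b)$ to the underlying exact sequence $a \stackrel{i}{\rightarrowtail} b \stackrel{q}{\twoheadrightarrow} c$ of $C$ produces a short exact sequence $M(c,b) \stackrel{q^*}{\rightarrowtail} M(b,b) \stackrel{i^*}{\twoheadrightarrow} M(a,b)$ in $Ab$ by exactness of $M$ in the first variable; hence $i^*$ is surjective and some $\beta \in M(b,b)$ satisfies $i^* \beta = -m$. For the second equation, it is enough to show that $q_* \beta - n$ lies in the image of $q^* \colon M(c,c) \to M(b,c)$, which by the analogous short exact sequence obtained from $M(-,c)$ coincides with the kernel of $i^* \colon M(b,c) \to M(a,c)$. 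Using the commutativity of $i^*$ and $q_*$ (functoriality of the bifunctor $M$) together with the defining identity $q_* m + i^* n = 0$ of Definition \ref{defexactCM}, one computes
\[ i^*(q_* \beta - n) = q_*(i^* \beta) - i^* n = -q_* m - i^* n = 0, \]
whence a (unique) $\gamma \in M(c,c)$ with $q^* \gamma = q_* \beta - n$ exists by injectivity of $q^*$.

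I do not foresee a substantive obstacle: the construction is essentially dictated by the exactness hypotheses. The point worth highlighting is how the defining condition $q_* m + i^* n = 0$ of an exact sequence in $C \ltimes M$ is precisely what is needed to make the obstruction $i^*(q_* \beta - n)$ vanish at the second step, so that the two-stage system for $(\alpha, \beta, \gamma)$ admits a solution and produces the claimed isomorphism onto $a \stackrel{(i,0)}{\longrightarrow} b \stackrel{(q,0)}{\longrightarrow} c$.
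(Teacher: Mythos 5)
Your argument is correct: the commutativity conditions are computed with the right signs, the surjectivity of $i^\ast\colon M(b,b)\rightarrow M(a,b)$ and the identification $\operatorname{im} q^\ast=\ker\bigl(i^\ast\colon M(b,c)\rightarrow M(a,c)\bigr)$ are exactly what exactness of $M$ in the contravariant variable gives for the admissible sequence $a\rightarrowtail b\twoheadrightarrow c$, and the relation $q_\ast m+i^\ast n=0$ kills the obstruction at the second step. The route differs from the paper's in one respect: the paper insists on an isomorphism whose components at $a$ and $c$ are $(\id,0)$, so it must solve the two equations $i^\ast x=-m$ and $q_\ast x=n$ \emph{simultaneously} with a single element $x\in M(b,b)$; this amounts to showing that $(i^\ast,q_\ast)\colon M(b,b)\rightarrow M(a,b)\times_{M(a,c)}M(b,c)$ surjects onto the fiber product, which the paper does with a diagram argument (the middle column is a pullback of a short exact sequence). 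You instead allow a nonzero correction $\gamma$ at $c$, which decouples the system into two successive one-variable lifting problems and avoids the fiber-product argument altogether --- a genuinely simpler proof of the lemma as stated, and sufficient for its use in Proposition \ref{classsplitext}. What the paper's sharper formulation buys is the normal form with corrections concentrated at the middle term only (zero on the "diagonal" subquotients), which is precisely the shape that gets generalized inductively in the proof of Theorem \ref{splitKRmodel}; your version would not directly serve as the $p=2$ base pattern for that construction.
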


\begin{proof}
Let $a\stackrel{(i,m)}{\longrightarrow}b\stackrel{(q,n)}{\longrightarrow}c$ be an exact sequence of $C\ltimes M$. We find an isomorphism of the form
\[\xymatrix{a\ar[d]_{(\id,0)}\ar[r]^{(i,m)}&b\ar[d]_{(\id,x)}
\ar[r]^{(q,n)}&c\ar[d]^{(\id,0)}\\
a\ar[r]_{(i,0)}&b\ar[r]_{(q,0)}&c
}\]
for some $x\in M(b,b)$. The vertical maps are isomorphisms for any $x\in M(b,b)$. For the squares to commute, $x$ needs to satisfy
\[i^\ast x=-m\ \ \ \ \ \mbox{and }\ \ \ \ \ q_\ast x=n\]
This is the case precisely when the element $(-m,n)$ is in the image of
\[(i^{\ast},q_{\ast})\colon M(b,b)\longrightarrow M(a,b)\times_{M(a,c)} M(b,c)\]
Consider the diagram of Abelian groups
\[\xymatrix{
M(c,b)\ar@{->>}[r]^{q_\ast}\ar@{>->}[d]_{q^\ast}&M(c,c)\ar@{=}[r]\ar@{>->}[d]&M(c,c)\ar@{>->}[d]^{q^{\ast}}\\
M(b,b)\ar[r]^-{(i^{\ast},q_{\ast})}\ar@{->>}[d]_{j^\ast}&M(a,b)\times_{M(a,c)} M(b,c)\ar[r]\ar@{->>}[d]&M(b,c)\ar@{->>}[d]^{j^\ast}\\
M(a,b)\ar@{=}[r]& M(a,b)\ar@{->>}[r]_{q_\ast}&M(a,c)
}
\]
The middle column is exact as pullback of an exact sequence. Therefore $(i^{\ast},q_{\ast})$ is surjective, and a lift of $(-m,n)$ exists.
\end{proof}

\begin{ex}\label{modulesoverltimes}
Let $A$ be a ring, $M$ an $A$-bimodule, and $p\colon A\ltimes M\rightarrow A$ the projection with zero section $s\colon A\rightarrow A\ltimes M$. The induced functors $p_\ast=(-)\otimes_{A\ltimes M}A$ and $s_{\ast}=(-)\otimes_{A}(A\ltimes M)$ define a split square zero extension of exact categories ($s_\ast$ is essentially surjective by \cite[1.2.5.4]{DGM}). Therefore \ref{classsplitext} provides an equivalence of categories
\[\mathcal{P}_{A\ltimes M}\simeq\mathcal{P}_A\ltimes \ker p_\ast\]
For $Q\in\mathcal{P}_A$ there is a natural isomorphism
\[s_\ast Q\cong Q\oplus (Q\otimes_AM)\]
where the right-hand side has module structure $(p',p\otimes n)\cdot(a,m)=(p'a, p\otimes na+p\otimes m)$. Under this isomorphism a module map $s_\ast Q\rightarrow s_\ast Q'$ is a matrix $\bigl(\begin{smallmatrix} \phi&0\\ f&\phi\otimes M \end{smallmatrix} \bigr)$ for module maps $\phi\colon Q\rightarrow Q'$ and $f\colon Q\rightarrow Q'\otimes_A M$ (see \cite[1.2.5.1]{DGM}). This gives a natural isomorphism of bimodules $\ker p_\ast\cong \hom_A(-,-\otimes_AM)$, and combined with \ref{classsplitext} an equivalence of exact categories
\[\mathcal{P}_{A\ltimes M}\simeq\mathcal{P}_A\ltimes \hom_A(-,-\otimes_AM)\]
\end{ex}

\section{The K-theory of $C\ltimes M$}

Let $C$ be an exact category and $M\colon C^{op}\otimes C\longrightarrow Ab$ a bimodule over $C$.
We aim at describing the $K$-theory $\K(C\ltimes M)$.
We recall from \cite[\S 1.3.3]{DGM} that for every natural number $p$, the bimodule $M$ extends to a bimodule $S_{p}M\colon S_{p}C^{op}\otimes S_{p}C\longrightarrow Ab$ on $S_{p}C$, defined as the Abelian subgroup
\[S_{p}M(X,Y)\subset \bigoplus\limits_{\theta\in Cat([1],[p])}M(X_\theta,Y_{\theta})
\]
of collections $\underline{m}$ satisfying the condition
\[X(a)^{\ast}m_\theta=Y(a)_\ast m_\rho\]
as elements of $M(X_\rho,Y_\theta)$, for all morphism $a\colon \rho\rightarrow\theta$ in $Cat([1],[p])$. In the terminology of \S\ref{cltimesmsec} this condition expresses the commutativity of the diagram
\[\xymatrix{X_{\rho}\ar[d]_{(X(a),0)}\ar[r]^{(0,m_\rho)}&Y_\rho
\ar[d]^{(Y(a),0)}\\
X_\theta\ar[r]_{(0,m_{\theta})}&Y_\theta
}\]
in $C\ltimes M$.
Iterating this construction defines for all $\underline{p}=(p_1,\dots,p_n)$  a bimodule on $S^{(n)}_{\underline{p}}C$
\[S^{(n)}_{\underline{p}}M=S_{p_1}\dots S_{p_n}M\]
A map $\sigma\colon \underline{p}\rightarrow\underline{q}$ induces a natural transformation $\sigma^{\ast}\colon S^{(n)}_{\underline{q}}M\Rightarrow
S^{(n)}_{\underline{p}}M\circ ((\sigma^\ast)^{op}\otimes \sigma^\ast)$ defining a $n$-simplicial structure on $S^{(n)}_{\sbt}M$.

\begin{ex}\label{exhomdiags}
Consider the bimodule $\hom_A(-,-\otimes_AM)$ on the category $\mathcal{P}_A$ of finitely generated projective modules over a ring $A$, induced by an $A$-bimodule $M$. In this case  $S^{(n)}_{\underline{p}}M$ is the Abelian group of natural transformations of diagrams
\[S^{(n)}_{\underline{p}}M(X,Y)=\hom_A(X,Y\otimes_AM)\]
where both the tensor product of a diagram of $A$-modules with $M$ and sum of natural transformations are pointwise.
\end{ex}

Let $\coprod_CM$ be the following groupoid. It has the same objects of $C$, and only automorphisms. The automorphisms of $c$ is $M(c,c)$, and composition is defined by addition in $M(c,c)$. It is the disjoint union of groups
\[{\coprod}_CM=\coprod\limits_{c\in ObC}M(c,c)\]
and its nerve is the disjoint union of classifying spaces $\coprod\limits_{c\in ObC}BM(c,c)$.
This construction applied to $S^{(n)}_{\underline{p}}M$ defines a category
\[S^{(n)}(C;M)_{\underline{p}}:=\coprod\limits_{S^{(n)}_{\underline{p}}C}
S^{(n)}_{\underline{p}}M\]
The simplicial structure maps $\sigma^{\ast}\colon S^{(n)}_{\underline{q}}C\rightarrow S^{(n)}_{\underline{p}}C$ together with the natural transformations $\sigma^{\ast}\colon S^{(n)}_{\underline{q}}M\Rightarrow
S^{(n)}_{\underline{p}}M\circ ((\sigma^\ast)^{op}\otimes \sigma^\ast)$ define a $n$-simplicial category $S_{\sbt}^{(n)}(C;M)$, whose realization is denoted
\[\K(C;M)_n=|\mathcal{N}_{\sbt}S_{\sbt}^{(n)}(C;M)|=|\coprod_{X\in Ob S_{\sbt}^{(n)}C}
S_{\sbt}^{(n)}BM(X,X)|\]
There are structure maps
\[\KR(C;M)_n\wedge S^{1}\longrightarrow \K(C;M)_{n+1}\]
induced by the inclusions of $1$-simplicies $S^{(n+1)}_{(1,p_1,\dots,p_n)}=S^{(n)}_{(p_1,\dots,p_n)}$,
defined analogously as for $\K(C)$. Permuting the $S_{\sbt}$-constructions gives a $\Sigma_n$-action on $\K(C;M)_n$ compatible with the structure maps, defining a symmetric spectrum $\K(C;M)$. 

\begin{rem}
There is a category $C[M]$ with objects $\coprod_{c\in C}M(c,c)$ and morphism from $m_c$ in the $c$-summand to $m_{c'}$ in $c'$-summand
\[C[M](m_c,m_{c'})=\{f\in C(c,c')| f^\ast m_{c'}=f_\ast m_c\}\]
This has an exact structure by defining a sequence to be exact if it is exact in $C$, and as simplicial sets
\[Ob S_{\sbt}C[M]=\hom\left(\coprod_{S_{\sbt}C}S_{\sbt }M\right)\]
This induces an equivalence in $K$-theory $K(C[BM])\simeq K(C;M)$.
\end{rem}

\begin{rem}
Our notation is off by a suspension factor compared to the notation of \cite{DM}. For $C=\mathcal{P}_A$ and $M=\hom_A(-,-\otimes_AM)$ what we denote $\K(\mathcal{P}_A;M)$ is
$\K(\mathcal{P}_A;BM)$ in the notation of \cite[3.1]{DM}.
\end{rem}

Define a functor $\Psi\colon \coprod_CM\longrightarrow i(C\ltimes M)$ by the identity on objects, and the inclusion at the identity $(\id_c,-)\colon M(c,c)\rightarrow C(c,c)\oplus M(c,c)$ on morphisms. This extends to a map of symmetric spectra $\K(C;M)\longrightarrow \K(C\ltimes M)$.

\begin{theorem}\label{splitKRmodel} The map $\K(C;M)\longrightarrow \K(C\ltimes M)$ is a levelwise equivalence of symmetric spectra above spectrum level one. 
\end{theorem}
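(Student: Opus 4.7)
The plan is to transfer the classical argument of \cite[1.2.5]{DGM} for split square zero extensions of rings to our exact-category setting. The key new ingredient, which is the content of the theorem, is a categorical identification that makes the classical argument applicable; once this identification is in place, the realization step proceeds essentially verbatim from the ring case.

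For the categorical identification, I would show that for each multi-index $\underline{p} = (p_1,\ldots,p_n)$, the zero section induces an equivalence of groupoids
\[
iS^{(n)}_{\underline{p}}C \ltimes S^{(n)}_{\underline{p}}M \;\xrightarrow{\simeq}\; iS^{(n)}_{\underline{p}}(C\ltimes M).
\]
Essential surjectivity of $s\colon S^{(n)}_{\underline{p}}C \to S^{(n)}_{\underline{p}}(C\ltimes M)$ is obtained by iterating Lemma~\ref{reductionexactseq} across each $S$-direction: every admissible exact sequence in a filtration in $C\ltimes M$ can be replaced by an isomorphic one with trivial $M$-component, and so every filtration is isomorphic to $sY$ for some $Y \in S^{(n)}_{\underline{p}}C$. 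Fullness and faithfulness are immediate, since a morphism of filtrations $sY \to sY'$ decomposes uniquely as a pair $(\phi,m)$ where the collection $m$ satisfies precisely the condition defining the extended bimodule $S^{(n)}_{\underline{p}}M$ recalled at the start of this section. Applying Remark~\ref{ltimesGrothendieck} to the groupoid $iS^{(n)}_{\underline{p}}C$ with the bimodule $S^{(n)}_{\underline{p}}M$, the category $iS^{(n)}_{\underline{p}}(C\ltimes M)$ is then identified with the Grothendieck construction $iS^{(n)}_{\underline{p}}C \wr (S^{(n)}_{\underline{p}}M\circ\Delta)$, under which the map $\Psi$ at level $\underline{p}$ becomes the inclusion of the subgroupoid $\coprod_{X}S^{(n)}_{\underline{p}}M(X,X)$ whose morphisms have identity $C$-component.

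The second step is the realization argument. At each fixed $\underline{p}$ this inclusion is not a weak equivalence, since the target also encodes the automorphisms of $iS^{(n)}_{\underline{p}}C$; after realizing over $\underline{p}$, however, those auto-groups are absorbed into the deloopings of $K(C)$ contributed by the $S$-construction, and the inclusion becomes an equivalence for $n\ge 1$. This step transcribes essentially verbatim from \cite[1.2.5]{DGM}: one exhibits a split fibration $\K(C;M)_n \to \K(C\ltimes M)_n \to \K(C)_n$ with section induced by $s$, and identifies the homotopy fiber of the projection with $\K(C;M)_n$ by an additivity-type computation. The equivalence fails at spectrum level $0$ precisely because there is no $S$-direction in which to realize.

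The principal obstacle, and the genuinely new point relative to the ring case, is the essential surjectivity of the zero section in the identification step. This relies on Lemma~\ref{reductionexactseq}, which in turn uses the exactness of $M$ in both variables, and is surprising when $C$ is not split-exact. Once the categorical identification above has been made, the realization step follows \cite[1.2.5]{DGM} essentially unchanged.
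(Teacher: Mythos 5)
Your first step is in substance the paper's own route (it is spelled out in Remark \ref{Ktheorymodelasgroth}: factor $\Psi$ through $(iS_{\sbt}C)\ltimes S_{\sbt}M$ and use \ref{classsplitext}), but the one point you defer --- essential surjectivity of the zero section $S_{p}C\to S_{p}(C\ltimes M)$ --- is exactly where the paper has to work, and ``iterating Lemma \ref{reductionexactseq}'' does not deliver it. An object $X\in S_{p}(C\ltimes M)$ is a diagram of many interlocking admissible exact sequences, and what is required is a \emph{single} natural isomorphism $X\cong s_{\ast}p_{\ast}X$ with components $(\id,x_{\theta})$; applying the $p=2$ lemma to each constituent sequence separately produces trivializing isomorphisms that need not agree on the objects they share, hence need not assemble into a natural transformation. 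The paper's proof of \ref{splitKRmodel} is devoted precisely to this coherence problem: it builds the elements $x_{(j,j+k)}$ by induction on $k$, observes that compatibility only has to be checked against $x_{(j,j+k-1)}$ and $x_{(j+1,j+k)}$, and solves the two resulting equations simultaneously by showing that $(i^{\ast},p_{\ast})$ surjects onto the relevant pullback, using exactness of $M$ for the sequence indexed by $(j,j+k-1,j+k)$ and admissibility of the epimorphism. Your proposal contains no substitute for this induction, so the genuinely new step of the theorem is missing (flagging it as ``the principal obstacle'' does not discharge it).

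The realization step is also misstated. The homotopy fiber of $\K(C\ltimes M)_n\to\K(C)_n$ is not $\K(C;M)_n$ but the reduced model $\widetilde{\K}(C;M)_n=\bigvee_{X}BS^{(n)}_{\sbt}M(X,X)$: the composite $\K(C;M)_n\to\K(C\ltimes M)_n\to\K(C)_n$ is the projection collapsing each $BS^{(n)}_{\sbt}M(X,X)$ to the vertex $X$ of $|Ob\,S^{(n)}_{\sbt}C|\simeq\K(C)_n$, so it is not null and your sequence cannot be a fibration sequence; moreover, were your identification of the fiber with $\K(C;M)_n$ true together with the theorem, the base $\K(C)_n$ would be forced to be weakly trivial. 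The fiber identification (with the wedge, not the coproduct model) is Corollary \ref{hofibKR}, deduced \emph{from} the theorem, not an available input, and no additivity computation appears in the paper's argument. What the paper actually does after the categorical identification is the standard Waldhausen ``objects suffice'' argument: it factors $\Psi$ through the subcategory $t$ of isomorphisms with identity $C$-component and shows that $tS_{\sbt}(C\ltimes M)\to iS_{\sbt}(C\ltimes M)$ is an equivalence on realizations because both realizations are equivalent to that of the object simplicial set $Ob\,S_{\sbt}(C\ltimes M)$; the levels $n\geq 1$ then follow by naturality, since $S_{\sbt}$ and realization preserve levelwise equivalences (and level $0$ fails simply because there is no $S_{\sbt}$-direction, as you say). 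Replacing your ``split fibration plus additivity'' paragraph by this argument, and supplying the coherence induction above, is what the paper's proof consists of.
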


\begin{proof}
Since all our constructions are natural, $S_{\sbt}$ preserves equivalences, and the realization of simplicial spaces preserves levelwise equivalences, it is enough to show that the map of the statement is an equivalence in spectrum degree one, that is that the functor
\[\Psi\colon \coprod_{S_{\sbt}C}
S_{\sbt}M\longrightarrow iS_{\sbt}(C\ltimes M)\]
induces an equivalence on classifying spaces. Factor this map as
\[\Psi\colon\coprod_{S_{\sbt}C}
S_{\sbt}M\longrightarrow tS_{\sbt}(C\ltimes M)\longrightarrow iS_{\sbt}(C\ltimes M)\]
where $t$ is the set of isomorphisms of $S_{\sbt}(C\ltimes M)$ in the image of $\Psi$.

By the standard argument of \cite{Wald}, the second map is an equivalence on realizations, as both spaces are equivalent to the realization of the objects of $S_{\sbt}(C\ltimes M)$ (see also \cite[\S 2.3]{DGM}).

The first map is levelwise a fully faithful functor. We show that it is also essentially surjective, proving that it is a levelwise equivalence of categories.
We show that any diagram $X\in S_{p}(C\ltimes M)$ is isomorphic to $s_\ast p_\ast X$, by a natural isomorphism of the form $(\id,x_{\theta})\colon X_{\theta}\rightarrow X_{\theta}$ for some $x_\theta\in M(X_{\theta},X_{\theta})$. This is a higher version of \ref{reductionexactseq}, where we proved this result for $p=2$.
For every morphism $\theta\leq \rho$ in $Cat([1],[p])$ let us denote the corresponding map in the diagram $X$ by $(f_{\theta\rho},m_{\theta\rho})\colon X_{\theta}\rightarrow X_{\rho}$. The naturality condition for the elements $x_\theta$ is precisely
\[f_{\theta\rho}^{\ast}x_\rho+m_{\theta\rho}=(f_{\theta\rho})_\ast x_{\theta} \ \ \ \ \in M(X_{\theta},X_{\rho})\] 
for all $\theta\leq\rho$ in $Cat([1],[p])$. Any injective $\theta\colon [1]\rightarrow [p]$ can be written uniquely as $\theta=(j,j+k)$ for $1\leq k\leq p$ and $0\leq j\leq p-k$. We construct $x_{(j,j+k)}$ inductively on $k$.

For $k=1$ the elements $x_{(j,j+1)}$ lie on the diagonal of $X$, and define $x_{(j,j+1)}=0$. Since all maps $X_{(j,j+1)}\rightarrow X_{(i,i+1)}$ are zero these elements are compatible between them.

Now suppose that $x_{(i,i+l)}$ are defined for all $l<k$ and $0\leq i\leq p-l$, and let us define $x_{(j,j+k)}$. Notice that every map $X_{\theta}\rightarrow X_{(j,j+k)}$ with $\theta\neq (j,j+k)$ factors through $X_{(j,j+k-1)}$, where $x_{(j,j+k-1)}$ has already been defined. Similarly every map $X_{(j,j+k)}\rightarrow X_{\rho}$ factors through $X_{(j+1,j+k)}=X_{(j+1,(j+1)+(k-1))}$, with $x_{(j,j+k-1)}$ already defined. Thus for $x_{(j,j+k)}$ to define a natural map, it only needs to be compatible with $x_{(j,j+k-1)}$ and $x_{(j+1,j+k)}$. Denoting the corresponding maps by $(i,m)\colon X_{(j,j+k-1)}\rightarrow X_{(j,j+k)}$ and $(p,n)\colon X_{(j,j+k)}\rightarrow X_{(j+1,j+k)}$ the compatibility conditions for $x_{(j,j+k)}$ are
\[i^{\ast}x_{(j,j+k)}=-m+i_\ast x_{(j,j+k-1)} \ \ \ \ \mbox{and}\ \ \ \ p_{\ast}x_{(j,j+k)}=n+p^\ast x_{(j+1,j+k)}\]
The elements $-m+i_\ast x_{(j,j+k-1)}$ and $n+p^\ast x_{(j+1,j+k)}$ lie over the same point of $M((j,j+k-1),(j+1,j+k))$, and therefore a solution $x_{(j,j+k)}$ exists if the map $(i^\ast,p_\ast)$ surjects onto the pullback $P$ in the diagram
\[\xymatrix{
M(X_{(j+k,j+k-1)},X_{(j,j+k)})\ar@{->>}[r]^{p_\ast}\ar@{>->}[d]&M(X_{(j+k,j+k-1)},X_{(j+1,j+k)})\ar@{=}[r]\ar@{>->}[d]&M(X_{(j+k,j+k-1)},X_{(j+1,j+k)})\ar@{>->}[d]\\
M(X_{(j,j+k)},X_{(j,j+k)})\ar[r]^-{(i^{\ast},p_{\ast})}\ar@{->>}[d]_{i^\ast}&P\ar[r]\ar@{->>}[d]&M(X_{(j,j+k)},X_{(j+1,j+k)})\ar@{->>}[d]^{i^\ast}\\
M(X_{(j,j+k-1)},X_{(j,j+k)})\ar@{=}[r]& M(X_{(j,j+k-1)},X_{(j,j+k)})\ar[r]_{p_\ast}&M(X_{(j,j+k-1)},
X_{(j+1,j+k)})
}
\]
The vertical sequences are exact by exactness of $M$ for the exact sequence induced by $(j,j+k-1,j+k)\colon [2]\rightarrow [p]$. Moreover the top left map $p_{\ast}$ is surjective since $p$ is an admissible epimorphism. Therefore $(i^\ast,p_\ast)$ is surjective.
\end{proof}

\begin{rem}\label{Ktheorymodelasgroth}
Theorem \ref{splitKRmodel} can interpreted in terms of Grothendieck constructions. The equivalence of the statement factors as
\[\Psi\colon\coprod_{S_{\sbt}C}
S_{\sbt}M\longrightarrow (iS_{\sbt}C)\ltimes S_{\sbt}M\stackrel{\simeq}{\longrightarrow} iS_{\sbt}(C\ltimes M)\]
with the second map an equivalence by \ref{classsplitext}, as $S_{\sbt}(C\ltimes M)\rightarrow S_{\sbt}C$ is a levelwise split square zero extension of exact categories (the section is essentially surjective by the proof of \ref{splitKRmodel}). We saw in \ref{ltimesGrothendieck} that $(iS_{\sbt}C)\ltimes S_{\sbt}M$
is levelwise the Grothendieck construction of the composite \[iS_{p}C\stackrel{\Delta}{\longrightarrow}(iS_{p}C)^{op}\times iS_{p}C\stackrel{S_pM}{\longrightarrow} Ab\]
Similarly the category $\coprod_{S_{\sbt}C}
S_{\sbt}M$ is levelwise the Grothendieck construction of the restriction to the descrete category of objects
\[ObS_{p}C\stackrel{\Delta}{\longrightarrow}(ObS_{p}C)^{op}\times ObS_{p}C\stackrel{S_pM}{\longrightarrow} Ab\]
Theorem  \ref{splitKRmodel} can be rephrased as saying that the inclusion of objects $Ob S_{\sbt}C\longrightarrow iS_{\sbt}C$ induces an equivalence (after geometric realization) on Grothendieck constructions
\[(Ob S_{\sbt}C)\wr (S_{\sbt}M\circ\Delta)\stackrel{\simeq}{\longrightarrow} (i S_{\sbt}C)\wr (S_{\sbt}M\circ\Delta)\]
\end{rem}

Theorem \ref{splitKRmodel} provides the following description of the (levelwise) homotopy fiber spectrum
\[\widetilde{\K}(C\ltimes M)=\hofib\big(\K(C\ltimes M)\longrightarrow\K(C)\big)\]
Define a symmetric spectrum in level $n$ by the space
\[\widetilde{\K}(C;M)_n=\bigvee_{X\in Ob S_{\sbt}^{(n)}C}
BS_{\sbt}^{(n)}M(X,X)\]
with structure maps and symmetric actions defined as for $\K(C;M)$.

\begin{cor}\label{hofibKR} There is a natural equivalence of symmetric spectra
\[\widetilde{\K}(C;M)\simeq \widetilde{\K}(C\ltimes M)\]
\end{cor}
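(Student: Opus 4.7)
The plan is to take levelwise homotopy fibers of the commutative diagram produced by Theorem \ref{splitKRmodel} and then identify the resulting fiber with the wedge $\widetilde{\K}(C;M)$ via the section $s_\ast$.

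First I would assemble, using the equivalence $\Psi$ of Theorem \ref{splitKRmodel}, the commutative square of symmetric spectra above spectrum level one
\[
\xymatrix{
\K(C;M)\ar[r]^{\Psi}\ar[d] & \K(C\ltimes M)\ar[d]^{p_{\ast}}\\
\K(C)\ar@{=}[r] & \K(C)
}
\]
whose left vertical factors in each level as the collapse $\K(C;M)_n\to|ObS^{(n)}_\bullet C|$, sending each summand $BS^{(n)}_\bullet M(X,X)$ to its basepoint vertex $X$, followed by the $0$-skeleton inclusion into $\K(C)_n=|\mathcal{N}iS^{(n)}_\bullet C|$. Passing to levelwise homotopy fibers of the columns and using that $\Psi$ is a levelwise equivalence above spectrum level one yields
\[
\hofib\bigl(\K(C;M)\to \K(C)\bigr)\simeq \widetilde{\K}(C\ltimes M).
\]

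Second, I would identify this homotopy fiber with $\widetilde{\K}(C;M)$. By construction, $\widetilde{\K}(C;M)_n=\bigvee_{X\in Ob S^{(n)}_\bullet C}BS^{(n)}_\bullet M(X,X)$ is the cofiber of the inclusion of basepoints
\[
|ObS^{(n)}_\bullet C|\hookrightarrow \K(C;M)_n\twoheadrightarrow \widetilde{\K}(C;M)_n,
\]
which identifies the basepoint vertices of the components of $\coprod_X BS^{(n)}_\bullet M(X,X)$ to a single point. Via $\Psi$, the subspace $|ObS^{(n)}_\bullet C|$ is the $0$-skeleton of $\K(C\ltimes M)_n$, which sits inside the image $s_\ast\K(C)_n$ of the section. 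Since $p_\ast$ is split by $s_\ast$, the fiber sequence $\widetilde{\K}(C\ltimes M)\to\K(C\ltimes M)\to\K(C)$ splits stably, so $\widetilde{\K}(C\ltimes M)\simeq\K(C\ltimes M)/s_\ast\K(C)$ as symmetric spectra. Comparing the two cofibers then produces a natural map $\widetilde{\K}(C;M)\to\widetilde{\K}(C\ltimes M)$, which is an equivalence precisely when the relative quotient $\K(C)/|ObS^{(n)}_\bullet C|$ is null as a symmetric spectrum above level one.

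The hardest step will be this last point: showing that the $0$-skeleton inclusion $|ObS^{(n)}_\bullet C|\hookrightarrow \K(C)_n$ becomes a levelwise equivalence of symmetric spectra above level one. This is a form of Waldhausen-style additivity, expressing the fact that the extra $S_\bullet$-direction present above level one absorbs the isomorphisms of $iS^{(n-1)}_\bullet C$ into the simplicial structure. I would prove it by applying a Quillen theorem A argument to the forgetful bisimplicial map from the nerve $\mathcal{N}iS^{(n)}_\bullet C$ down to the simplicial set of objects, checking that the relevant over-categories are contractible once one extra $S_\bullet$ is in play.
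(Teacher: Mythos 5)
Your overall strategy is sound and closely parallels the paper's: reduce the corollary to the split-fiber-is-cofiber lemma plus the fact that the $0$-skeleton inclusion $|Ob S^{(n)}_{\bullet}C|\hookrightarrow \K(C)_n$ is an equivalence above spectrum level one. The organization is slightly different --- the paper applies the split-fiber-is-cofiber observation directly to the left column $p\colon \K(C;M)\to\{|Ob S^{(n)}_{\bullet}C|\}$ (whose section is the inclusion of basepoints), identifying $\hofib(p)$ with the cofiber $\widetilde{\K}(C;M)$ in one step, whereas you apply it to the right column $p_{\ast}$ and then compare cofiber sequences, reducing to the same statement about the $0$-skeleton inclusion. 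Both routes rely on the same two ingredients, and your reduction is logically correct.

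The problem is your proposed proof of that final ingredient. The equivalence $|Ob S^{(n)}_{\bullet}C|\stackrel{\simeq}{\to}|\mathcal{N}_{\bullet}iS^{(n)}_{\bullet}C|$ for $n\geq 1$ is a standard fact, and the paper simply cites it (as it already did in the proof of Theorem \ref{splitKRmodel}, via Waldhausen and \cite[\S 2.3]{DGM}); it is not something this corollary needs to reprove. Your sketched Quillen Theorem A argument does not work as stated. There is no ``forgetful bisimplicial map'' from the nerve down to the objects compatible with the simplicial structure in the nerve direction: the only sensible candidate, evaluation at the initial vertex, fails to commute with $d_0$. And if one instead tries to apply Theorem A to the inclusion $Ob\, D\to D$ degreewise in $S_{\bullet}$, the comma category $(Ob\, D)/d$ over an object $d$ of a groupoid $D$ is a discrete set (the objects isomorphic to $d$), not a contractible category, so the hypothesis is simply false level by level. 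The extra $S_{\bullet}$-direction is indeed what makes the statement true, but the mechanism (Waldhausen's swallowing of isomorphisms into the degeneracies) is not captured by a naive comma-category argument. Replacing your last paragraph with the appropriate citation would make the proof complete and essentially equivalent to the paper's.
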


\begin{proof}
Projecting off the bimodule defines maps of $(n+1)$-simplicial sets
\[p_n\colon \mathcal{N}_{\sbt}S_{\sbt}^{(n)}(C;M)\longrightarrow Ob S_{\sbt}^{(n)}C\]
where the target has a trivial simplicial direction in the nerve. The collection of realizations of $n$-simplicial sets
$|Ob S_{\sbt}^{(n)}C|$ forms a symmetric spectrum in the standard way, and there is a commutative diagram
\[\xymatrix{\K(C;M)\ar[d]_{p}\ar[r]^-{\simeq}&\K(C\ltimes M)\ar[d]\\
\{|Ob S_{\sbt}^{(n)}C|\}\ar[r]_-{\simeq}&\K(C)
}\]
This gives an equivalence from the homotopy fiber of $p$ to $\widetilde{\K}(C\ltimes M)$. The inclusion at zero in $M(c,c)$ induces a splitting $s$ for $p$. As a general fact, the homotopy fiber of $p$ is equivalent (in homotopy groups) to the homotopy cofiber of $s$. Indeed if $p\colon E\longrightarrow B$ is a map of spectra split by a levelwise cofibration $s$, the square
\[\xymatrix{E\ar[d]_p\ar[r]&\hoc(s)\ar[d]\\
B\ar[r]&\ast
}\]
is cartesian since the projection to the homotopy cofiber $B\rightarrow E\rightarrow \hoc(s)$ is a fiber sequence. The map on vertical homotopy fibers $\hofib(p)\rightarrow \hoc(s)$ is therefore an equivalence.
Since both $s$ and its restriction on fixed points are levelwise cofibrations, the homotopy cofiber of $s$ is equivalent to its cofiber, which in our case is precisely $\widetilde{\K}(C;M)$.
\end{proof}

\begin{rem}
For the category $\mathcal{P}_A$ with bimodule $M=\hom_A(-,-\otimes_AM)$ we recover the equivalence
\[\widetilde{\K}(\mathcal{P}_{A\ltimes M})\underset{\ref{modulesoverltimes}}{\simeq}\widetilde{\K}(\mathcal{P}_{A}\ltimes M)\simeq\bigvee_{X\in Ob S_{\sbt}^{(n)}\mathcal{P}_A}\hom_{A}(X,X\otimes_ABM)\]
of \cite{DM}.
\end{rem}

\section{The categorical Dundas-McCarthy theorem}\label{DMsec}

We refer to \cite{BHM} and \cite{Witt} for the definition of topological cyclic homology $\TC$, and to \cite{ringfctrs} for the trace map $\tr\colon \K\rightarrow \TC$.

\begin{maintheorem}
Let $C$ be an exact category, and $M\colon C^{op}\otimes C\rightarrow Ab$ an additive functor exact in both variables. The square
\[\xymatrix{ \K(C\ltimes M)\ar[d]\ar[r]^{\tr}&\TC(C\ltimes M)\ar[d]\\
K(C)\ar[r]_{\tr}&\TC(C)
}
\]
is homotopy cartesian, for the exact structure on $C\ltimes M$ of \ref{defexactCM}.
\end{maintheorem}

Using the description of the homotopy fiber $\widetilde{K}(C\ltimes M)$ of \ref{hofibKR}, the proof of the Dundas-McCarthy theorem of \cite{McCarthy} for the functor $\mathcal{P}_{A\ltimes M}\rightarrow \mathcal{P}_A$ adapts with almost no change to our situation. We recall the proof.

\begin{proof}
For an Abelian group $G$ and a finite pointed set $X$, let $G(X)$ be the Abelian group
\[G(X)=(\bigoplus_{x\in X}Gx)/G\ast\]
This construction is functorial in $G$, and 
precomposing it with $M\colon C^{op}\otimes C\rightarrow Ab$ defines a bimodule $M(X)\colon C^{op}\otimes C\rightarrow Ab$, with corresponding exact category $C\ltimes M(X)$.
Any functor $F$ from exact categories to the category $\Sp^{\Sigma}$ of symmetric spectra induces a functor $F(C\ltimes M(-))\colon Set_{\ast}^f\rightarrow \Sp^{\Sigma}$ on the category of finite sets. This extends to a functor from finite pointed simplicial sets to simplicial symmetric spectra. For any finite pointed simplicial set $X\in sSet_{\ast}^f$ define $F(C\ltimes M(X))$ as the realization spectrum
\[F(C\ltimes M(X))=|[k]\longmapsto F(C\ltimes M(X_k))|\]
The projection functor $C\ltimes M(X)\rightarrow C$ induces a map $F(C\ltimes M(X))\rightarrow F(C)$, and define 
\[\widetilde{F}(C\ltimes M(X))=\hofib\big(F(C\ltimes M(X))\longrightarrow F(C)\big)\]
as functor $\widetilde{F}(C\ltimes M(-))\colon sSet_{\ast}^f\rightarrow \Sp^\Sigma$.

In \cite[4.2]{McCarthy} McCarthy constructs a commutative diagram
\[\xymatrix{\widetilde{\K}(C\ltimes M(X))\ar[dr]\ar@{-->}[]!<0ex,-2ex>;[ddr]!<-7ex,1ex>\ar[rr]^{\widetilde{\tr}}&&\widetilde{\TC}(C\ltimes M(X))\ar[dl]\ar@{-->}[]!<0ex,-2ex>;[ddl]!<7ex,1ex>\\
&\widetilde{\THH}(C\ltimes M(X))\ar@{-->}[d]\\
&\THH(C; M(X\wedge S^1))
}\]
where the dashed maps are weak maps. 
The left dashed map corresponds under the equivalence of \ref{hofibKR} to the inclusion of wedges into sums
\[\bigvee\limits_{c\in Ob S_{\sbt}^{(n)}C}\!\!\!\!\!BM(c,c)(X)=\!\!\!\!\!\bigvee\limits_{c\in Ob S_{\sbt}^{(n)}C}\!\!\!\!\!M(c,c)(X\wedge S^1)\rightarrow \bigoplus\limits_{c\in Ob S_{\sbt}^{(n)}C}\!\!\!\!\!M(c,c)(S^1\wedge X)\stackrel{\simeq}{\rightarrow} \THH(C;M(X\wedge S^1))\]
where the last map is the inclusion of the $0$-simplicies, which is an equivalence by \cite[1.3.3.1, \S 4.3.4]{DGM}. The composite is then roughly twice as connected as the connectivity of $X$, and therefore it induces an equivalence on Goodwillie differentials
\[D_\ast\widetilde{\K}(C\ltimes M(-))\simeq D_\ast\THH(C; M(-\wedge S^1))\]
For the right-hand dashed map, let $H$ denote the Eilemberg-MacLane functor from categories enriched in Abelian groups to categories enriched in symmetric spectra. The inclusion of wedges into direct sums defines an enriched functor $HC\vee HM\rightarrow H(C\ltimes M)$ inducing an equivalence $\THH(HC\vee HM)\stackrel{\simeq}{\rightarrow} \THH(C\ltimes M)$ over $\THH(C)$. Moreover $\THH(HC\vee HM)$ splits as
\[\THH(HC\vee HM)\simeq\bigvee_{a\geq 0}\THH_a(C;M)\]
where $\THH_a(C;M)$ is defined from the subcomplex of $\THH(HC\vee HM)$ of wedge components with $M$ appearing in $a$ smash factors, as in \cite{STC}. Using this decomposition, Hesselholt's proof of \cite{STC} holds for $C\ltimes M$, showing that the right dashed map induces an equivalence on differentials at a point
\[D_\ast\widetilde{\TC}(C\ltimes M(-))\simeq D_\ast\THH(C; M(-\wedge S^1))\]
after completion at a prime $p$. By the argument of \cite[7.1.2.7]{DGM} it is an equivalence before $p$-completion.

By commutativity of the diagram above the trace map induces an equivalence $D_\ast\widetilde{\tr}\colon D_\ast\widetilde{\K}(C\ltimes M(-))\stackrel{\simeq}{\rightarrow} D_\ast\widetilde{\TC}(C\ltimes M(-))$.
Notice moreover that for any based space $B$ there is an isomorphism of exact categories 
\[C\ltimes M(B\vee X)\cong C\ltimes (M(B)\oplus M(X))\cong (C\ltimes M(B))\ltimes M(X)\]
where $M(X)$ is a bimodule on $C\ltimes M(B)$ via the projection $C\ltimes M(B)\rightarrow C$. This shows that $D_\ast\widetilde{\K}(C\ltimes M(B\vee -))\simeq D_\ast\widetilde{\TC}(C\ltimes M(B\vee -))$, and therefore by \cite[1.3(iv)]{calcI} that the differential of the trace at any finite pointed based space $B$
\[D_B\widetilde{\tr}\colon D_B\widetilde{\K}(C\ltimes M(-))\stackrel{\simeq}{\rightarrow} D_B\widetilde{\TC}(C\ltimes M(-))\]
is an equivalence. Both functors $\widetilde{\K}(C\ltimes M(-)),\widetilde{\TC}(C\ltimes M(-))\colon sSet_{\ast}^f\rightarrow \Sp^\Sigma$ are $(-1)$-analytic in the sense of \cite{calcI},\cite{calcII}. The first one by following the proof of \cite[\S 3]{McCarthy} for $\widetilde{\K}(C\ltimes M(-))\simeq \widetilde{\K}(C;M(-))$, and the second one following \cite[\S 1]{McCarthy} using the wedge decomposition of $\THH(C\ltimes M(X))$ above. By \cite[5.3]{calcII} the trace map must already have been an equivalence before taking the differential
\[\widetilde{\tr}\colon \widetilde{\K}(C\ltimes M(X))\stackrel{\simeq}{\longrightarrow} \widetilde{\TC}(C\ltimes M(X))\]
for any ($(-1)$-connected) space $X$. For $X=S^0$ this is the equivalence $\widetilde{\tr}\colon \widetilde{\K}(C\ltimes M)\stackrel{\simeq}{\rightarrow} \widetilde{\TC}(C\ltimes M)$, and the square of the statement is cartesian.
\end{proof} 

\section{More objects}

The construction $C\ltimes M$ of \S\ref{cltimesmsec} makes sense in much greater generality. Given a functor $M\colon C^{op}\times C\rightarrow Cat$ define a category $C\ltimes M$ with objects pairs $(c\in Ob C,x\in Ob M(c,c))$ and morphism sets
\[\hom((c,x),(d,y))=\{f\colon c\rightarrow d,\gamma\colon f_\ast x\rightarrow f^\ast y \in M(c,d)\}\]
The identity of $(c,x)$ is the map $(\id_c,\id_x)$, and composition of 
$(c,x)\stackrel{(f,\gamma)}{\rightarrow}(d,y)\stackrel{(g,\delta)}{\rightarrow}(e,z)$ is defined by
\[(g,\delta)\circ(f,\gamma)=(g\circ f,g_\ast f_\ast x\stackrel{g_\ast\gamma}{\rightarrow}g_\ast f^\ast y=f^\ast g_\ast y\stackrel{f^\ast \delta}{\rightarrow}f^\ast g^\ast z)\]
When $M$ factors through Abelian groups we recover the construction of \S\ref{cltimesmsec}. As in  \ref{ltimesGrothendieck}, the isomorphism groupoid $i (C\ltimes M)$ is isomorphic to the Grothendieck construction of the composite
\[iC\stackrel{\Delta}{\longrightarrow}iC^{op}\times iC\stackrel{M}{\longrightarrow} Cat\]
This construction comes with a natural projection map $C\ltimes M\rightarrow C$.

If $C$ is an exact category one might wonder how far we can push the techniques of the previous section to prove a Dundas-McCarthy theorem for the functor $C\ltimes M\rightarrow C$. We are going to argue that one cannot do much better than the main theorem of \S\ref{DMsec}. First of all, in order to get an exact structure on $C\ltimes M$ one needs to assume the following:

\begin{itemize}
\item A zero object on $C\ltimes M$: This requires $M$ to be reduced in both variables, that is that $M(c,0)=M(0,c)=\ast$ is the trivial category for every $c\in C$. This implies that every category $M(c,d)$ comes equipped with a distinguished object $\ast=M(c,0)\rightarrow M(c,d)$ induced by the unique map $0\rightarrow d$. That is, $M$ must take values in pointed categories.
\item An $(Ab,\otimes)$-enrichement on $C\ltimes M$: this needs every category $M(c,d)$ to be enriched in the symmetric monoidal category $(Ab,\oplus)$ of Abelian groups and direct sums.
\item A family of exact sequences on $C\ltimes M$: following \S\ref{cltimesmsec} declare a sequence $(a,x)\stackrel{(i,\gamma)}{\rightarrow}(b,y)\stackrel{(q,\delta)}{\rightarrow}(c,z)$ to be exact if the underlying sequence $a\stackrel{i}{\rightarrow}b\stackrel{q}{\rightarrow}c$ is exact in $C$ and the composite $i^\ast\delta\circ q_\ast\gamma$ factors as the composite
\[(i_c\circ t_a)_\ast x\stackrel{(i_c)_\ast \id_\ast}{\longrightarrow}(i_c)_\ast (t_a)^\ast\ast \stackrel{(t_a)^\ast \id_\ast}{\longrightarrow}(i_c\circ t_a)^\ast z\]
where $i_c\colon 0\rightarrow c$ and $t_a\colon a\rightarrow 0$ are the unique maps and $(i_c)_\ast (t_a)^\ast\ast\in M(a,c)$ is the distinguished object. By trying to run the proof of \ref{CltimesMexact} for showing that this defines indeed an exact structure, one needs to invert the maps in $M(a,c)$. Thus the functor $M$ needs to take values in groupoids. Moreover $M$ needs to be exact in both variables, in the sense that for every exact sequence $a\stackrel{i}{\rightarrow}b\stackrel{q}{\rightarrow}c$ in $C$ and every object $d\in C$ and pair of objects $x,y\in M(d,a)$ the sequence
\[\hom_{M(d,a)}\stackrel{i_\ast}{\longrightarrow} \hom_{M(d,b)}(i_\ast x,i_\ast y)\stackrel{q_\ast}{\longrightarrow}\hom_{M(d,c)}(q_\ast i_\ast x,q_\ast i_\ast y)\]
is a short exact sequence of Abelian groups, and a similar condition for $M(-,d)$.
\end{itemize}

To sum up, the techniques of \ref{CltimesMexact} allow one to show that $C\ltimes M$ has a canonical exact structure induced from $C$, provided that $M\colon C^{op}\times C\rightarrow (Ab,\oplus)$-$\mathcal{G}_\ast$ takes values in the category $(Ab,\oplus)$-$\mathcal{G}_\ast$ of pointed $(Ab,\oplus)$-enriched groupoids, and it is exact in both variables.

In comparing the relative $K$-theory and the relative $\TC$ of the map $C\ltimes M\rightarrow C$ one needs to study the analytical properties of the functor
\[\bigvee_{S_{\sbt}C}\mathcal{N}M(c,c)(-)\colon sSet^{f}_\ast\longrightarrow \Sp^\Sigma\]
defined by taking configurations with labels in the morphism groups, a construction similar to \S\ref{DMsec}. This functor needs to be $(-1)$-analytic, as we are interested in its value on the $(-1)$-connected space $S^0$. The functor above is unfortunately at most $0$-analytic, unless the groupoid $M(c,c)$ is connected, so that the nerve shifts connectivity up by one. This means that we can prove the Dundas-McCarthy theorem for $C\times M\rightarrow C$ through calculus of functors only when $M$ takes values in the category $(Ab,\oplus)$-$\mathcal{G}^{c}_\ast$ of $(Ab,\oplus)$-enriched pointed connected groupoids. Evaluation at the distinguished object defines an equivalence of categories between $(Ab,\oplus)$-$\mathcal{G}^{c}_\ast$ and the category of $(Ab,\oplus)$-enriched groups. One can easily see that the two group laws on a $(Ab,\oplus)$-enriched group must be the same. Thus we recover the situation of the previous sections where $M\colon C^{op}\times C\rightarrow Ab$ takes values in Abelian groups.

\bibliographystyle{amsalpha}
\bibliography{CatDM}

\end{document}